\providecommand{\U}[1]{\protect \rule{.1in}{.1in}}
\newtheorem{theorem}{Theorem}
\newtheorem{definition}[theorem]{Definition}
\newtheorem{lemma}[theorem]{Lemma}
\newtheorem{proposition}[theorem]{Proposition}
\newtheorem{remark}[theorem]{Remark}
\newenvironment{proof}[1][Proof]{\noindent \textbf{#1.} }{\  \hfill $\square$ }
\newdimen \dummy
\begin{document}

\title{Dirichlet Problem for Coupled Elliptic Systems}
\author{Azeddine Baalal\thanks{a.baalal@fsac.ac.ma} \ and \ Mohamed
Berghout\thanks{moh.berghout@gmail.com} \\Department of Mathematics-Laboratory MACS \\Faculty of Sciences A\"{\i}n Chock, University Hassan II of Casablaca\\B.P 5366 Maarif Casablanca 20100 - Morocco}
\date{2016}
\maketitle

\begin{abstract}
Let $\Omega$ be a bounded domain in $%
%TCIMACRO{\U{211d} }%
%BeginExpansion
\mathbb{R}
%EndExpansion
^{d}$ $\left(  d\geq2\right)  $ pretty regular. In this paper we solve
the\ variational Dirichlet problem for a class of quasi-linear elliptic systems.

\end{abstract}

{\LARGE Introduction}

In the classical theory of the Laplace equation several main parts of
mathematics are joined in a fruitful way: Calculus of Variations, Partial
Differential Equations, Potential Theory, Function Theory (Analytic
Functions), Mathematical Physics and Calculus of Probability. This is the
strength of the classical theory. The $p$-Laplace equation occupies a similar
position, when it comes to non-linear phenomena.

Problem involving the $p-$Laplacian operator appears in pure mathematics such
as the theory of quasiregular and quasiconformal mapping as well as in applied
mathematics. Indeed, it intervenes in numerous fields in experimental
sciences: nonlinear reaction-diffusion problems, dynamics of populations,
non-Newtonian fluids, ect....for this reasons the word \textquotedblleft%
$p$-Laplacian\textquotedblright \ has become a key word in nonlinear analysis
and problems involving this second order quasilinear operator are now
extensively studied in the literature. This elliptic operator it generalizes
the usual Laplace operator $\triangle=\triangle_{2} $ whose study has been
widely discussed in recent decades but the lack of the Hilbert structure of
the space $W_{0}^{1,p}\left(  \Omega \right)  $ when passing from $p=2$ to
$p\neq2$ make his study very difficult.

The notion of the solution of quasilinear homogeneous Dirichlet problem
associated with the $p$-poisson equation
\[
\left \{
\begin{array}
[c]{c}%
-\triangle_{p}u=f\text{ \ in }\Omega \text{ ;}\\
u=0\text{ on }\partial \Omega \text{.}%
\end{array}
\right.
\]
is always understood in a weak sense, precisely is a function $u\in$
$W_{0}^{1,p}\left(  \Omega \right)  $.

In literature, there exists numerous papers dedicated to the study of
equations and systems involving the $p$-laplacian operator. In fact the study
of scalar equations had really started in the middle of 80s by M. \^{O}tani
\cite{MO} in one dimension then in dimension $n$ by F. de Th\'{e}lin
\cite{F.D} who obtained the first results on the equation of the form:
$\ -\triangle_{p}u=\lambda u^{\gamma-1}$, after the existence and the
uniqueness of radial solutions in $%
%TCIMACRO{\U{211d} }%
%BeginExpansion
\mathbb{R}
%EndExpansion
^{n}$ have showed independently by the last author and W. M. Ni. Serrin
\cite{MNS}. this result has been generalized by M. \^{O}tani \cite{MO01} to
any arbitrary open subset of $%
%TCIMACRO{\U{211d} }%
%BeginExpansion
\mathbb{R}
%EndExpansion
^{n}$. In 1987, F. de Th\'{e}lin \cite{FD1} has extended these results to the
equation of the type $\triangle_{p}u=g(x,u)$ where $g$ is a function
controlled by polynomial functions in $u$. In addition, there are other
results on the uniqueness were stated by J. I. D\`{\i}az and J. E. Saa
\cite{JES} in 1987 for the equation $-\triangle_{p}u=f(x,u)$.

The case of systems presents a new challenge and leads to tremendous
complications related to the coupling and lot of work done in this area for
instance we cite \cite{FD2}, \cite{RFF}, \cite{TJV}, \cite{J.C} ,
\cite{D.G.F}, \cite{FMST} \ and \cite{A.C.T}.

Let $\Omega$ be a bounded domain in $%
%TCIMACRO{\U{211d} }%
%BeginExpansion
\mathbb{R}
%EndExpansion
^{d}$ $\left(  d\geq2\right)  $ pretty regular, and let $\mathcal{L}_{1},$
$\mathcal{L}_{2}$ are a quasi-linear elliptic differential operators in
divergence form
\[
\left \{
\begin{array}
[c]{c}%
\mathcal{L}_{1}\left(  u,v\right)  :=-\triangle_{p}u+\varphi \left(
.,u,v\right)  \text{ \ }\\
\mathcal{L}_{2}\left(  u,v\right)  :=-\triangle_{p}v+\psi \left(  .,u,v\right)
\text{ }\
\end{array}
\right.
\]
where $\triangle_{p}u:=\operatorname{div}\left(  \left \vert \nabla
u\right \vert ^{p-2}\nabla u\right)  $ $\ $denotes the $p$-Laplacian operator
and $\varphi,\psi:\Omega \times%
%TCIMACRO{\U{211d} }%
%BeginExpansion
\mathbb{R}
%EndExpansion
\times%
%TCIMACRO{\U{211d} }%
%BeginExpansion
\mathbb{R}
%EndExpansion
\rightarrow%
%TCIMACRO{\U{211d} }%
%BeginExpansion
\mathbb{R}
%EndExpansion
$ are given Carath\'{e}odory functions satisfying:

\begin{itemize}
\item[$\left(  H_{1}\right)  $] $\left \vert \varphi \left(  x,u,v\right)
\right \vert \leq a_{1}\left \vert u\right \vert ^{p-1}+a_{2}\left \vert
v\right \vert ^{p-1}$ ;

$\left \vert \psi \left(  x,u,v\right)  \right \vert \leq b_{1}\left \vert
v\right \vert ^{p-1}+b_{2}\left \vert u\right \vert ^{p-1}$ .
\end{itemize}

where $a_{1},a_{2},b_{1},b_{2}$ \ are positive constants.

\begin{itemize}
\item[$\left(  H_{2}\right)  $] $s\mapsto \varphi \left(  x,s,t\right)  $ and
$s\mapsto \psi \left(  x,s,t\right)  $ are increasing for all $t\in%
%TCIMACRO{\U{211d} }%
%BeginExpansion
\mathbb{R}
%EndExpansion
$;

$t\mapsto \varphi \left(  x,s,t\right)  $ and $t\mapsto \psi \left(  x,s,t\right)
$ are increasing for all $s\in%
%TCIMACRO{\U{211d} }%
%BeginExpansion
\mathbb{R}
%EndExpansion
$.
\end{itemize}

Our aim in this paper is to solve the Dirichlet problem
\begin{equation}
\left \{
\begin{array}
[c]{c}%
\mathcal{L}_{1}\left(  u,v\right)  :=-\triangle_{p}u+\varphi \left(
.,u,v\right)  =0,\text{ \ in }\Omega \text{ };\\
\mathcal{L}_{2}\left(  u,v\right)  :=-\triangle_{p}v+\psi \left(  .,u,v\right)
\text{ }=0,\text{ in }\Omega \text{ };\\
u=h,v=k,\text{ on }\partial \Omega.
\end{array}
\right. \label{1eq1}%
\end{equation}
with a continous data boundary.

This paper consists of three sections. First, we give a some definitions for
the weak subsolutions, supersolution and solution of the system(\ref{1eq1})
and we prove that if a pair of functions $\left(  u,v\right)  $ is a
supersolution (resp. subsolution) of (\ref{1eq1}), then the pair of functions
$\left(  u+\alpha,v+\beta \right)  $(resp. $\left(  u-\alpha,v-\beta \right)  $)
is also a supersolution (resp. subsolution) of (\ref{1eq1}) for every
$\alpha>0$ and $\beta \geq0$. In section two we show that the comparison
principale is hold for $\mathcal{L}_{1}$ and $\mathcal{L}_{2}$. After this
preparation we are able in section three to solve the Dirichlet problem. So at
first by the Shaulder's fixed point theorem we prove the existence of
solutions to the associated variational problem for $\Omega$ with $\left \vert
\Omega \right \vert $ is small enough, after we solve the Dirichlet problem with
continuous data boundary for $\Omega$ $\ p- $regular with $\left \vert
\Omega \right \vert $ is small enough . Finally in the general case we will
approach $\Omega$ by an compact $K$ such that $\Omega \setminus K$ $\ $be
$p-$regular with $\left \vert \Omega \right \vert $ is small enough. Then by an
argument of collection of $K$ with a $p-$regular set who have a rather small
Lebesgue measure and by choiceing a sequence of functions and using a diagonal
extraction process; we show that the limit of a sequence of functions it's the solution.

\textbf{Notation}

Throughout of this paper we will use the following notation: $%
%TCIMACRO{\U{211d} }%
%BeginExpansion
\mathbb{R}
%EndExpansion
^{d}$ is the real Euclidean $d-$space, $d\geq2$. $\Omega$ is a open bounded
domaine of $%
%TCIMACRO{\U{211d} }%
%BeginExpansion
\mathbb{R}
%EndExpansion
^{d}$ pretty regular. For every $p\in \left[  1;+\infty \right[  $ we denote by
$p^{^{\prime}}=\frac{p}{p-1}$ the H\"{o}lder conjugate of $p$. For a mesurable
set $X$ and for $p\geq1$, $L^{p}\left(  X\right)  $ is the $p^{th}-$power
Lebesgue space defined on $X$ and $L^{p^{^{\prime}}}\left(  X\right)  $
denotes the dual space of $L^{p}\left(  X\right)  $. For an open set $U$ of $%
%TCIMACRO{\U{211d} }%
%BeginExpansion
\mathbb{R}
%EndExpansion
^{d}$, we denote by $\mathcal{C}^{k}\left(  U\right)  $ the set of functions
which $k-$th derivative is continuous for $k$ positive integer, $\mathcal{C}%
^{\infty}\left(  U\right)  =\cap_{k\geq1}\mathcal{C}^{k}\left(  U\right)  $
and by $\mathcal{C}_{c}^{\infty}\left(  U\right)  $ the set of all functions
in $\mathcal{C}^{\infty}\left(  U\right)  $ with compact support. The Sobolev
space $\mathcal{W}^{1,p}\left(  U\right)  $ is the Banach space of all
functions $u\in L^{p}\left(  U\right)  $ whose gradient in the distribution
sense $\nabla u\in \left(  L^{p}\left(  U\right)  \right)  ^{d}$, equipped with
the norm $\left \Vert u\right \Vert _{1,p}=\left \Vert u\right \Vert
_{p}+\left \Vert \nabla u\right \Vert _{p}$ , $\mathcal{W}_{loc}^{1,p}\left(
U\right)  $ is the space of all functions $u\in \mathcal{W}^{1,p}\left(
U\right)  $ for every open $U\subset \overline{U}\subset \Omega$ .
$\mathcal{W}_{0}^{1,p}\left(  U\right)  $ is the closure of $\mathcal{C}%
_{c}^{\infty}\left(  U\right)  $ in $\mathcal{W}^{1,p}\left(  U\right)  $
relatively to its norm (for more details see fo example \cite{MaZi97},
\cite{Mazj85}, \cite{Maz2011}, \cite{FD2012}, \cite{H.brez}). For $E\subset%
%TCIMACRO{\U{211d} }%
%BeginExpansion
\mathbb{R}
%EndExpansion
^{d}$ mesurable we denotes by $\left \vert E\right \vert $ the Lebesgue measure
of $E$. If $A\subset%
%TCIMACRO{\U{211d} }%
%BeginExpansion
\mathbb{R}
%EndExpansion
^{d}$ we denote by $\overline{A}$ the topological closure of $A$ and by
$\partial A$ the topological boundary of $A$. for a function $f$ we denote
$f^{+}:=\max \left \{  f,0\right \}  $, and we design by $\lim \sup$ (resp.
$\lim \inf$) the upper limit (resp. lower limit) of a real function.
$\mathcal{C}\left(  X\right)  $ the set of all continuous functions on $X$.
The order on the set of pairs of functions on a set $M$ is the usual order
product:
\[
\left(  f,g\right)  \leq \left(  h,k\right)  \Leftrightarrow f\leq h\text{ and
}g\leq k\text{.}%
\]

\section{Solution of (\ref{1eq1})}

\begin{definition}
\bigskip We say that a pair of functions $\left(  u,v\right)  \in
\mathcal{W}_{loc}^{1,p}\left(  \Omega \right)  \times \mathcal{W}_{loc}%
^{1,p}\left(  \Omega \right)  $ is a weak solution of (\ref{1eq1}) if
\[
\left \{
\begin{array}
[c]{c}%
\int_{\Omega}\left \vert \nabla u\right \vert ^{p-2}\nabla u\nabla \eta
_{1}dx+\int_{\Omega}\varphi \left(  x,u,v\right)  \eta_{1}dx=0,\\
\int_{\Omega}\left \vert \nabla v\right \vert ^{p-2}\nabla v\nabla \eta
_{2}dx+\int_{\Omega}\psi \left(  x,u,v\right)  \eta_{2}dx=0,
\end{array}
\right.
\]
for all $\eta_{1}$, $\eta_{2}\in \mathcal{W}_{0}^{1,p}\left(  \Omega \right)  $.

We say that a pair of functions $\left(  u,v\right)  \in \mathcal{W}%
_{loc}^{1,p}\left(  \Omega \right)  \times \mathcal{W}_{loc}^{1,p}\left(
\Omega \right)  $ is a supersolution (resp. subsolution) of (\ref{1eq1}) if
\[
\left \{
\begin{array}
[c]{c}%
\int_{\Omega}\left \vert \nabla u\right \vert ^{p-2}\nabla u\nabla \eta
_{1}dx+\int_{\Omega}\varphi \left(  x,u,v\right)  \eta_{1}dx\geq0\\
\int_{\Omega}\left \vert \nabla v\right \vert ^{p-2}\nabla v\nabla \eta
_{2}dx+\int_{\Omega}\psi \left(  x,u,v\right)  \eta_{2}dx\geq0
\end{array}
\right.  \text{ (resp.}\left \{
\begin{array}
[c]{c}%
\leq0\\
\leq0
\end{array}
\text{)}\right.
\]
for all nonnegative functions $\eta_{1}$, $\eta_{2}\in \mathcal{W}_{0}%
^{1,p}\left(  \Omega \right)  $.
\end{definition}

\begin{proposition}
If a pair of functions $\left(  u,v\right)  $ is a supersolution (resp.
subsolution) of (\ref{1eq1}). then the pair of functions $\left(
u+\alpha,v+\beta \right)  $(resp. $\left(  u-\alpha,v-\beta \right)  $) is also
a supersolution (resp. subsolution) of (\ref{1eq1}) for every $\alpha>0$ and
$\beta \geq0$ .
\end{proposition}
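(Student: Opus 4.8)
The plan is to verify the weak supersolution inequalities directly for the shifted pair $(u+\alpha, v+\beta)$, using only hypothesis $(H_2)$ (the monotonicity of $\varphi$ and $\psi$ in each variable). The key observation is that $\nabla(u+\alpha) = \nabla u$ and $\nabla(v+\beta) = \nabla v$ since $\alpha$ and $\beta$ are constants, so the leading $p$-Laplacian terms are unchanged: $\int_\Omega |\nabla(u+\alpha)|^{p-2}\nabla(u+\alpha)\nabla\eta_1\,dx = \int_\Omega |\nabla u|^{p-2}\nabla u\nabla\eta_1\,dx$, and similarly for $v$. Hence the only thing to control is the change in the zeroth-order terms $\int_\Omega \varphi(x, u+\alpha, v+\beta)\eta_1\,dx$ versus $\int_\Omega \varphi(x,u,v)\eta_1\,dx$.

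For the supersolution case, fix a nonnegative $\eta_1 \in \mathcal{W}_0^{1,p}(\Omega)$. Since $s \mapsto \varphi(x,s,t)$ is increasing and $\alpha > 0$, we have $\varphi(x, u(x)+\alpha, v(x)) \geq \varphi(x, u(x), v(x))$ pointwise a.e.; then since $t \mapsto \varphi(x,s,t)$ is increasing and $\beta \geq 0$, we have $\varphi(x, u(x)+\alpha, v(x)+\beta) \geq \varphi(x, u(x)+\alpha, v(x)) \geq \varphi(x,u,v)$ a.e. Multiplying by $\eta_1 \geq 0$ and integrating preserves the inequality, so
\[
\int_\Omega |\nabla(u+\alpha)|^{p-2}\nabla(u+\alpha)\nabla\eta_1\,dx + \int_\Omega \varphi(x,u+\alpha,v+\beta)\eta_1\,dx \geq \int_\Omega |\nabla u|^{p-2}\nabla u\nabla\eta_1\,dx + \int_\Omega \varphi(x,u,v)\eta_1\,dx \geq 0.
\]
The same argument applied to the second inequality with $\psi$ (monotone in both arguments) gives the companion estimate for $\eta_2 \geq 0$. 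This shows $(u+\alpha, v+\beta)$ is a supersolution. For the subsolution case, one runs the identical argument with reversed inequalities: replacing $u$ by $u-\alpha$ and $v$ by $v-\beta$ with $\alpha > 0$, $\beta \geq 0$ makes both arguments of $\varphi$ and $\psi$ decrease, hence $\varphi(x,u-\alpha,v-\beta) \leq \varphi(x,u,v)$ and likewise for $\psi$, and the defining integral inequalities flip to $\leq 0$ as required. One should also note $(u\pm\alpha, v\pm\beta) \in \mathcal{W}_{loc}^{1,p}(\Omega) \times \mathcal{W}_{loc}^{1,p}(\Omega)$ since adding a constant does not affect local Sobolev regularity.

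There is essentially no serious obstacle here; the proposition is a soft consequence of the monotonicity hypothesis $(H_2)$ combined with the translation-invariance of the gradient. The only point requiring a modicum of care is to apply the two monotonicity properties in sequence (first in the $u$-slot, then in the $v$-slot, via an intermediate comparison term) rather than trying to invoke a joint monotonicity that was not assumed, and to make sure the sign conditions $\alpha > 0$, $\beta \geq 0$ are used consistently so that the inequalities point the right way in both the supersolution and subsolution cases.
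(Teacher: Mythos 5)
Your proof is correct and follows essentially the same route as the paper: both exploit that $\nabla(u+\alpha)=\nabla u$, $\nabla(v+\beta)=\nabla v$, and then chain the two monotonicity properties of $(H_2)$ through an intermediate comparison term (you pass via $\varphi(x,u+\alpha,v)$, the paper via $\varphi(x,u,v+\beta)$, which is immaterial) to reduce to the supersolution inequality for $(u,v)$. No substantive difference.
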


\begin{proof}
Let $\eta_{1}$, $\eta_{2}\in \mathcal{W}_{0}^{1,p}\left(  \Omega \right)  $ two
non-negative tests functions, $\alpha>0$ and $\beta \geq0$ . Then
\[
\left \{
\begin{array}
[c]{c}%
\int_{\Omega}\left \vert \nabla \left(  u+\alpha \right)  \right \vert
^{p-2}\nabla \left(  u+\alpha \right)  \nabla \eta_{1}dx+\int_{\Omega}%
\varphi \left(  x,u+\alpha,v+\beta \right)  \eta_{1}dx=\\
\int_{\Omega}\left \vert \nabla \left(  u+\alpha \right)  \right \vert
^{p-2}\nabla \left(  u+\alpha \right)  \nabla \eta_{1}dx+\int_{\Omega}%
\varphi \left(  x,u,v\right)  \eta_{1}dx+\\
\int_{\Omega}\left(  \varphi \left(  x,u+\alpha,v+\beta \right)  -\varphi \left(
x,u,v+\beta \right)  \right)  \eta_{1}dx+\int_{\Omega}\left(  \varphi \left(
x,u,v+\beta \right)  -\varphi \left(  x,u,v\right)  \right)  \eta_{1}dx
\end{array}
\right.
\]

and
\[
\left \{
\begin{array}
[c]{c}%
\int_{\Omega}\left \vert \nabla \left(  v+\beta \right)  \right \vert ^{p-2}%
\nabla \left(  v+\beta \right)  \nabla \eta_{2}dx+\int_{\Omega}\psi \left(
x,u+\alpha,v+\beta \right)  \eta_{2}dx=\\
\int_{\Omega}\left \vert \nabla \left(  v+\beta \right)  \right \vert ^{p-2}%
\nabla \left(  v+\beta \right)  \nabla \eta_{2}dx+\int_{\Omega}\psi \left(
x,u,v\right)  dx+\\
\int_{\Omega}\left(  \psi \left(  x,u+\alpha,v+\beta \right)  -\psi \left(
x,u,v+\beta \right)  \right)  \eta_{2}dx+\int_{\Omega}\left(  \psi \left(
x,u,v+\beta \right)  -\psi \left(  x,u,v\right)  \right)  \eta_{2}dx
\end{array}
\right.
\]
since $\nabla \left(  u+\alpha \right)  =\nabla u$ and $\nabla \left(
v+\beta \right)  =\nabla v$ , then
\begin{align*}
& \int_{\Omega}\left \vert \nabla \left(  u+\alpha \right)  \right \vert
^{p-2}\nabla \left(  u+\alpha \right)  \nabla \eta_{1}dx+\int_{\Omega}%
\varphi \left(  x,u+\alpha,v+\beta \right)  \eta_{1}dx=\\
& \int_{\Omega}\left \vert \nabla u\right \vert ^{p-2}\nabla u\nabla \eta
_{1}dx+\int_{\Omega}\varphi \left(  x,u,v\right)  \eta_{1}dx+\int_{\Omega
}\left(  \varphi \left(  x,u+\alpha,v+\beta \right)  -\varphi \left(
x,u,v+\beta \right)  \right)  \eta_{1}dx\\
& +\int_{\Omega}\left(  \varphi \left(  x,u,v+\beta \right)  -\varphi \left(
x,u,v\right)  \right)  \eta_{1}dx
\end{align*}

and
\[
\left \{
\begin{array}
[c]{c}%
\int_{\Omega}\left \vert \nabla \left(  v+\beta \right)  \right \vert ^{p-2}%
\nabla \left(  v+\beta \right)  \nabla \eta_{2}dx+\int_{\Omega}\psi \left(
x,u+\alpha,v+\beta \right)  \eta_{2}dx=\\
\int_{\Omega}\left \vert \nabla v\right \vert ^{p-2}\nabla v\nabla \eta
_{2}dx+\int_{\Omega}\psi \left(  x,u,v\right)  dx+\\
\int_{\Omega}\left(  \psi \left(  x,u+\alpha,v+\beta \right)  -\psi \left(
x,u,v+\beta \right)  \right)  \eta_{2}dx+\int_{\Omega}\left(  \psi \left(
x,u,v+\beta \right)  -\psi \left(  x,u,v\right)  \right)  \eta_{2}dx
\end{array}
\right.
\]

using $\left(  H_{2}\right)  $ and the fact that $\left(  u,v\right)  $ is a
supersolution, we get
\[
\left \{
\begin{array}
[c]{c}%
\int_{\Omega}\left \vert \nabla \left(  u+\alpha \right)  \right \vert
^{p-2}\nabla \left(  u+\alpha \right)  \nabla \eta_{1}dx+\int_{\Omega}%
\varphi \left(  x,u+\alpha,v+\beta \right)  \eta_{1}dx\geq0\\
\int_{\Omega}\left \vert \nabla \left(  v+\beta \right)  \right \vert ^{p-2}%
\nabla \left(  v+\beta \right)  \nabla \eta_{2}dx+\int_{\Omega}\psi \left(
x,u+\alpha,v+\beta \right)  \eta_{2}dx\geq0
\end{array}
\right.
\]

which shows that $\left(  u+\alpha,v+\beta \right)  $ is a supersolution. By
the same way we show that $\left(  u-\alpha,v-\beta \right)  $ is a subsolution.
\end{proof}

\section{Variational Dirichlet problem}

\bigskip We put:

$s:=\frac{dr\left(  p-1\right)  }{d-pr}$ where $r\in \left[  \frac
{dp^{^{\prime}}}{d+p^{^{\prime}}};\frac{d}{p}\right]  $, $1<p<d$ and $\frac
{d}{p}\leq p^{^{\prime}}$ for some $d.$

Let $f$ $,$ $g\in L^{r}\left(  \Omega \right)  \cap L^{p^{^{\prime}}}\left(
\Omega \right)  $ and $h,k\in \mathcal{W}^{1,p}\left(  \Omega \right)  $. For
$f\in L^{r}\left(  \Omega \right)  $ let $\widetilde{u}_{f}\in \mathcal{W}%
_{o}^{1,p}\left(  \Omega \right)  $ the solution of
\[
\left \{
\begin{array}
[c]{c}%
\triangle_{p}u=f\text{ \ in }\Omega \text{,}\\
u-h\text{ }\in \mathcal{W}_{o}^{1,p}\left(  \Omega \right)  \text{.}%
\end{array}
\right.
\]

and for $g\in L^{r}\left(  \Omega \right)  $ let $\  \widetilde{v}_{g}%
\in \mathcal{W}^{1,p}\left(  \Omega \right)  $ the solution of
\[
\left \{
\begin{array}
[c]{c}%
\triangle_{p}v=g\text{\ in }\Omega \text{,}\\
v-k\text{ }\in \mathcal{W}_{o}^{1,p}\left(  \Omega \right)  \text{.}%
\end{array}
\right.
\]

Consider the following Dirichlet problem%
\[
\left(  P\right)  \left \{
\begin{array}
[c]{c}%
\begin{array}
[c]{c}%
\mathcal{L}_{1}\left(  u,v\right)  :=-\triangle_{p}u+\varphi \left(
x,u,v\right)  =0\text{ a.e. }x\in \Omega \text{,}\\
\mathcal{L}_{2}\left(  u,v\right)  :=-\triangle_{p}v+\psi \left(  x,u,v\right)
=0\text{ a.e. }x\in \Omega \text{,}%
\end{array}
\\
u-h\in \mathcal{W}_{0}^{1,p}\left(  \Omega \right)  \text{ and }v-k\in
\mathcal{W}_{0}^{1,p}\left(  \Omega \right)  \text{ on }\partial \Omega \text{.}%
\end{array}
\right.
\]

We set $\widetilde{u}=u-h$ and $\widetilde{v}=v-k$, then we take back to the
following homogeneous variational problem:
\[
\left(  \widetilde{P}\right)  \left \{
\begin{array}
[c]{c}%
\begin{array}
[c]{c}%
\mathcal{L}_{1}\left(  \widetilde{u},\widetilde{v}\right)  \left(  x\right)
:=-\triangle_{p}\left(  \widetilde{u}+h\right)  +\widetilde{\varphi}\left(
x,\widetilde{u},\widetilde{v}\right)  =0\text{ a.e. }x\in \Omega \\
\mathcal{L}_{2}\left(  \widetilde{u},\widetilde{v}\right)  \left(  x\right)
:=-\triangle_{p}\left(  \widetilde{v}+k\right)  +\widetilde{\psi}\left(
x,\widetilde{u},\widetilde{v}\right)  =0\text{ a.e. }x\in \Omega
\end{array}
\\
\widetilde{u}\in \mathcal{W}_{0}^{1,p}\left(  \Omega \right)  ,\widetilde{v}%
\in \mathcal{W}_{0}^{1,p}\left(  \Omega \right)  \text{ .}%
\end{array}
\right.
\]

where%

\begin{align*}
\widetilde{\varphi}\left(  x,u,v\right)   & =\varphi \left(  x,u+h,v+k\right)
\text{,}\\
\widetilde{\psi}\left(  x,u,v\right)   & =\psi \left(  x,u+h,v+k\right)
\text{.}%
\end{align*}

\begin{remark}

\begin{itemize}
\item Using $\left(  H_{1}\right)  $ and inequality
\[
\left \vert a+b\right \vert ^{p}\leq \left \{
\begin{array}
[c]{c}%
\left(  1+\mathcal{\varepsilon}\right)  ^{p-1}\left \vert a\right \vert
^{p}+\left(  1+\frac{1}{\mathcal{\varepsilon}}\right)  ^{p-1}\left \vert
b\right \vert ^{p}\text{ for \ }1\leq p<\infty \text{;}\\
\left \vert a\right \vert ^{p}+\left \vert b\right \vert ^{p}\text{ for
\ }0<p<1\text{.}%
\end{array}
\right.
\]

for arbitrary $a,b\in$ $%
%TCIMACRO{\U{211d} }%
%BeginExpansion
\mathbb{R}
%EndExpansion
$ and $\mathcal{\varepsilon}>0$, we obtain that $\  \widetilde{\varphi
},\widetilde{\psi}$ $:\Omega \times%
%TCIMACRO{\U{211d} }%
%BeginExpansion
\mathbb{R}
%EndExpansion
\times%
%TCIMACRO{\U{211d} }%
%BeginExpansion
\mathbb{R}
%EndExpansion
\rightarrow%
%TCIMACRO{\U{211d} }%
%BeginExpansion
\mathbb{R}
%EndExpansion
$ are given Carath\'{e}odory functions satisfying: $\ $%
\[
\left(  \widetilde{H}\right)  \left \{
\begin{array}
[c]{c}%
\left \vert \widetilde{\varphi}\left(  x,u,v\right)  \right \vert \leq
a_{1}^{^{\prime}}\left \vert u\right \vert ^{p-1}+a_{2}^{^{\prime}}\left \vert
v\right \vert ^{p-1}+c\left(  x\right)  \text{;}\\
\left \vert \widetilde{\psi}\left(  x,u,v\right)  \right \vert \leq
b_{1}^{^{\prime}}\left \vert u\right \vert ^{p-1}+b_{2}^{^{\prime}}\left \vert
v\right \vert ^{p-1}+c^{^{\prime}}\left(  x\right)  \text{.}%
\end{array}
\right.
\]

\end{itemize}

almost everywhere $x\in \Omega$. Where
\begin{align*}
a_{1}^{^{\prime}}  & =a_{1}\left(  1+\mathcal{\varepsilon}\right)
^{p-1}\text{, }a_{2}^{^{\prime}}=a_{2}\left(  1+\mathcal{\varepsilon}\right)
^{p-1}\text{ , }\\
b_{1}^{^{\prime}}  & =b_{1}\left(  1+\mathcal{\varepsilon}\right)
^{p-1}\text{, }b_{2}^{^{\prime}}=b_{2}\left(  1+\mathcal{\varepsilon}\right)
^{p-1}\text{,}\\
c\left(  x\right)   & =a_{1}\left(  1+\frac{1}{\mathcal{\varepsilon}}\right)
^{p-1}\left \vert h\left(  x\right)  \right \vert ^{p-1}+a_{2}\left(  1+\frac
{1}{\mathcal{\varepsilon}}\right)  ^{p-1}\left \vert k\left(  x\right)
\right \vert ^{p-1}\text{,}\\
c^{^{\prime}}\left(  x\right)   & =b_{1}\left(  1+\frac{1}%
{\mathcal{\varepsilon}}\right)  ^{p-1}\left \vert h\left(  x\right)
\right \vert ^{p-1}+b_{2}\left(  1+\frac{1}{\mathcal{\varepsilon}}\right)
^{p-1}\left \vert k\left(  x\right)  \right \vert ^{p-1}\text{.}%
\end{align*}

\begin{itemize}
\item If $\left(  \widetilde{u},\widetilde{v}\right)  $ is a solution of
$\left(  \widetilde{P}\right)  $. $\left(  \widetilde{u}+h,\widetilde
{v}+k\right)  $ is a solution of $(P)$.

\item $\widetilde{\varphi}\left(  x,u,v\right)  $, $\widetilde{\psi}\left(
x,u,v\right)  \in L^{p^{^{\prime}}}\left(  \Omega \right)  $.
\end{itemize}
\end{remark}

\begin{theorem}
If $\  \left \vert \Omega \right \vert $ is small enough. Then system $(P)$ admits
a weak solution.
\end{theorem}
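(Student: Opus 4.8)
The plan is to reformulate the problem $(\widetilde P)$ as a fixed point equation and apply Schauder's fixed point theorem, exactly as the introduction announces. First I would set up the solution operator: given $(w_1,w_2)\in L^p(\Omega)\times L^p(\Omega)$, use $(\widetilde H)$ and the last bullet of the Remark to see that $x\mapsto\widetilde\varphi(x,w_1,w_2)$ and $x\mapsto\widetilde\psi(x,w_1,w_2)$ lie in $L^{p'}(\Omega)\subset\mathcal{W}^{-1,p'}(\Omega)$, so that the two scalar Dirichlet problems
\[
-\triangle_p(\widetilde u+h)=-\widetilde\varphi(\cdot,w_1,w_2),\qquad -\triangle_p(\widetilde v+k)=-\widetilde\psi(\cdot,w_1,w_2)
\]
with $\widetilde u,\widetilde v\in\mathcal{W}_0^{1,p}(\Omega)$ each have a unique solution by the standard monotone-operator (Browder--Minty) theory for the $p$-Laplacian. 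This defines a map $\mathcal{T}(w_1,w_2)=(\widetilde u,\widetilde v)$, and a fixed point of $\mathcal{T}$ is precisely a solution of $(\widetilde P)$, hence, by the second bullet of the Remark, yields a solution of $(P)$.

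Next I would establish the a priori bound that makes $\left\vert\Omega\right\vert$ small the crucial hypothesis. Testing the first equation with $\widetilde u$ and the second with $\widetilde v$, using $\int|\nabla(\widetilde u+h)|^{p-2}\nabla(\widetilde u+h)\nabla\widetilde u\,dx\ge$ (something controlling $\|\nabla\widetilde u\|_p^p$ up to lower order terms in $h$), then bounding the right-hand side by $(\widetilde H)$ and Hölder's inequality, and finally invoking the Sobolev--Poincaré embedding $\mathcal{W}_0^{1,p}(\Omega)\hookrightarrow L^{s}$ whose constant is controlled by a power of $\left\vert\Omega\right\vert$ on the scale defined at the top of Section~2 (this is exactly why $s=\tfrac{dr(p-1)}{d-pr}$ and the range of $r$ were introduced). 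When $\left\vert\Omega\right\vert$ is small enough the coefficients $a_i',b_i'$ get multiplied by a small embedding constant, so the nonlinear terms can be absorbed into the left-hand side, producing a ball $B_R\subset\mathcal{W}_0^{1,p}(\Omega)\times\mathcal{W}_0^{1,p}(\Omega)$ (equivalently, after the compact embedding, in $L^p\times L^p$) that $\mathcal{T}$ maps into itself.

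Then I would verify the remaining Schauder hypotheses: $\mathcal{T}$ restricted to that ball is continuous and compact when viewed as a map on $L^p(\Omega)\times L^p(\Omega)$. Compactness comes from the fact that $\mathcal{T}$ produces a bounded set in $\mathcal{W}_0^{1,p}\times\mathcal{W}_0^{1,p}$, which embeds compactly into $L^p\times L^p$ since $\Omega$ is bounded and regular; continuity follows because the superposition (Nemytskii) operators associated with $\widetilde\varphi,\widetilde\psi$ are continuous from $L^p$ to $L^{p'}$ under the growth condition $(\widetilde H)$ (a standard Carathéodory/Vainberg argument), combined with the continuous dependence of the solution of the $p$-Laplace Dirichlet problem on its right-hand side in $\mathcal{W}^{-1,p'}$. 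Schauder's theorem then gives a fixed point in $B_R$, i.e. a weak solution of $(P)$.

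The main obstacle I expect is the self-map step: one must track the embedding constants carefully and use the monotonicity-type coercivity estimate for $-\triangle_p$ in a form that separates the contribution of the fixed boundary data $h,k$ (which appear in $c(x),c'(x)$) from the genuinely nonlinear terms, so that the smallness of $\left\vert\Omega\right\vert$ genuinely forces $\mathcal{T}(B_R)\subset B_R$ for a suitable $R$; the coupling between the two equations through $\widetilde\varphi,\widetilde\psi$ means the two estimates have to be closed simultaneously, which is where $(H_1)$/$(\widetilde H)$ with its cross terms $a_2|v|^{p-1}$, $b_2|u|^{p-1}$ must be handled jointly rather than one equation at a time.
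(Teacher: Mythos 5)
Your proposal is correct in outline and shares the paper's overall architecture (decompose into the $p$-Laplace solution operator plus the Nemytskii operators of $\widetilde\varphi,\widetilde\psi$, then apply Schauder with $\left\vert\Omega\right\vert$ small to get an invariant ball), but it runs the fixed point in the opposite composition and gets the crucial smallness from a different source. The paper iterates on the right-hand sides: it defines $\Lambda(f,g)=(\mathcal{B}_1\circ T(f,g),\mathcal{B}_2\circ T(f,g))$ on $L^{r}(\Omega)\times L^{r}(\Omega)$, where $T(f,g)=(\widetilde u_f,\widetilde v_g)$ solves the two scalar $p$-Laplace problems, and the invariant ball $\mathcal{K}_M$ comes from the cited a priori estimate of Daners--Dr\'abek, $\Vert\widetilde u_f\Vert_{s}^{p-1}\le C\Vert f\Vert_{r}$, combined with H\"older's inequality (this is where the factor $C\vert\Omega\vert^{p/d}<1$ and the whole $r,s$ framework of Section~2 are used); compactness is carried by $T$ through $\mathcal{W}_0^{1,p}\hookrightarrow L^p$. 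You instead iterate on the solution pair in $L^{p}(\Omega)\times L^{p}(\Omega)$ (your $\mathcal{T}$ is essentially $T\circ\mathcal{B}$ rather than $\mathcal{B}\circ T$) and obtain the self-map bound by testing with the solution and using a Poincar\'e constant that degenerates as $\vert\Omega\vert\to0$; note that for this route plain Poincar\'e in $L^p$ already closes the estimate, so you do not actually need the exponent $s=\frac{dr(p-1)}{d-pr}$ or the $L^{r}$ scale at all. Your route is thus more elementary and self-contained, at the price of two points you should carry out explicitly: the coercivity estimate must separate the boundary data (the terms coming from $h,k$, i.e.\ $c,c'$) from the homogeneous part, and you must prove continuity of the $p$-Laplace solution operator with respect to the right-hand side in $\mathcal{W}^{-1,p'}(\Omega)$ via the standard monotonicity inequalities (treated separately for $p\ge2$ and $1<p<2$) --- a step the paper's own Lemma on $T$ also glosses over. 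The paper's route, conversely, outsources the quantitative bound to a regularity theorem but requires the restrictions on $r$ and the verification that $\mathcal{B}_1,\mathcal{B}_2$ map $L^p\times L^p$ into $L^{r}$ continuously.
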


\textbf{Proof\bigskip}

We consider the operator:%
\[%
\begin{array}
[c]{ccc}%
T: & L^{r}\left(  \Omega \right)  \times L^{r}\left(  \Omega \right)
\rightarrow & L^{p}\left(  \Omega \right)  \times L^{p}\left(  \Omega \right) \\
& \text{ \  \  \  \  \  \  \  \  \  \  \ }\left(  f,g\right)  \mapsto & \left(
\widetilde{u}_{f},\widetilde{v}_{g}\right)
\end{array}
\]

\begin{lemma}
The operator $%
\begin{array}
[c]{ccc}%
T: & L^{r}\left(  \Omega \right)  \times L^{r}\left(  \Omega \right)
\rightarrow & L^{p}\left(  \Omega \right)  \times L^{p}\left(  \Omega \right)
\end{array}
$ is completely continuous.
\end{lemma}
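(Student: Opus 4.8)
The plan is to exploit that $T$ is diagonal: $T(f,g)=(Sf,S'g)$, where $Sf:=\widetilde{u}_{f}$ is the unique element of $h+\mathcal{W}_{0}^{1,p}(\Omega)$ solving $\triangle_{p}u=f$, and $S'g:=\widetilde{v}_{g}$ solves the same problem with $k$ in place of $h$. Since $S$ and $S'$ have identical structure, it suffices to prove that $S:L^{r}(\Omega)\to L^{p}(\Omega)$ is completely continuous, i.e.\ continuous and mapping bounded sets to relatively compact sets. First I would observe that the constraints $1<p<d$, $r\in[\tfrac{dp^{\prime}}{d+p^{\prime}},\tfrac{d}{p}]$ and $\tfrac{d}{p}\leq p^{\prime}$ are exactly what makes the Sobolev embedding $\mathcal{W}_{0}^{1,p}(\Omega)\hookrightarrow L^{r^{\prime}}(\Omega)$ hold, and hence $L^{r}(\Omega)$ embeds continuously in the dual space $\mathcal{W}^{-1,p^{\prime}}(\Omega)$ of $\mathcal{W}_{0}^{1,p}(\Omega)$; since $u\mapsto-\triangle_{p}(u+h)$ is a monotone, coercive, hemicontinuous operator from $\mathcal{W}_{0}^{1,p}(\Omega)$ onto $\mathcal{W}^{-1,p^{\prime}}(\Omega)$, the Browder--Minty theorem yields existence and uniqueness of $\widetilde{u}_{f}$, so $S$ is well defined.

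Second, I would record the a priori bound and deduce compactness. Testing the weak form of $\triangle_{p}\widetilde{u}_{f}=f$ with $\widetilde{u}_{f}-h\in\mathcal{W}_{0}^{1,p}(\Omega)$ and using Young's inequality together with the Poincaré and Sobolev inequalities gives $\|\widetilde{u}_{f}\|_{1,p}\leq C\big(\|f\|_{L^{r}}+\|h\|_{1,p}\big)$ with $C=C(\Omega,d,p)$; thus $S$ sends bounded subsets of $L^{r}(\Omega)$ into bounded subsets of $\mathcal{W}^{1,p}(\Omega)$. Since $\Omega$ is bounded and pretty regular, the Rellich--Kondrachov theorem gives that the embedding $\mathcal{W}^{1,p}(\Omega)\hookrightarrow L^{p}(\Omega)$ is compact, so $S$ maps bounded sets to relatively compact subsets of $L^{p}(\Omega)$.

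The remaining and decisive point is continuity. Let $f_{n}\to f$ in $L^{r}(\Omega)$; by the bound above $(\widetilde{u}_{f_{n}})$ is bounded in $\mathcal{W}^{1,p}(\Omega)$. From an arbitrary subsequence extract a further one (not relabelled) with $\widetilde{u}_{f_{n}}\rightharpoonup w$ in $\mathcal{W}^{1,p}(\Omega)$, hence $\widetilde{u}_{f_{n}}\to w$ in $L^{p}(\Omega)$ and a.e., with $w-h\in\mathcal{W}_{0}^{1,p}(\Omega)$. Taking $\eta=\widetilde{u}_{f_{n}}-w\in\mathcal{W}_{0}^{1,p}(\Omega)$ as test function in
\[
\int_{\Omega}|\nabla\widetilde{u}_{f_{n}}|^{p-2}\nabla\widetilde{u}_{f_{n}}\cdot\nabla\eta\,dx=-\int_{\Omega}f_{n}\eta\,dx ,
\]
using that $\eta\rightharpoonup0$ in $\mathcal{W}_{0}^{1,p}(\Omega)$ while $f_{n}\to f$ in $\mathcal{W}^{-1,p^{\prime}}(\Omega)$ so the right-hand side tends to $0$, and subtracting the term $\int_{\Omega}|\nabla w|^{p-2}\nabla w\cdot\nabla\eta\,dx$, which also tends to $0$ because $\nabla\widetilde{u}_{f_{n}}\rightharpoonup\nabla w$ in $L^{p}(\Omega)$ and $|\nabla w|^{p-2}\nabla w\in L^{p^{\prime}}(\Omega)$, one obtains
\[
\int_{\Omega}\big(|\nabla\widetilde{u}_{f_{n}}|^{p-2}\nabla\widetilde{u}_{f_{n}}-|\nabla w|^{p-2}\nabla w\big)\cdot\nabla(\widetilde{u}_{f_{n}}-w)\,dx\longrightarrow0 .
\]
By the monotonicity ($(S_{+})$) property of the $p$-Laplacian this forces $\nabla\widetilde{u}_{f_{n}}\to\nabla w$ strongly in $L^{p}(\Omega)$, so one may pass to the limit in every term of the weak formulation and conclude $\triangle_{p}w=f$ with $w-h\in\mathcal{W}_{0}^{1,p}(\Omega)$, i.e.\ $w=\widetilde{u}_{f}$ by uniqueness. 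Since each subsequence of $(\widetilde{u}_{f_{n}})$ has a further subsequence converging in $L^{p}(\Omega)$ to the same limit $\widetilde{u}_{f}$, the whole sequence converges, so $S$ is continuous; being also compact, $S$ (and likewise $S'$) is completely continuous, hence so is $T$. The main obstacle is precisely this last limit passage: the nonlinear principal part $|\nabla u_{n}|^{p-2}\nabla u_{n}$ cannot be passed to the limit by weak convergence alone, and one must first upgrade to strong convergence of the gradients through the monotonicity/$(S_{+})$ argument.
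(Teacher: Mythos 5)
Your argument is correct, but it follows a genuinely different route from the paper's. The paper disposes of the lemma in two lines: it invokes the regularity/continuous-dependence theory for the $p$-Laplace Dirichlet problem to assert that $T$ is continuous from $L^{r}(\Omega)\times L^{r}(\Omega)$ into $\mathcal{W}_{0}^{1,p}(\Omega)\times\mathcal{W}_{0}^{1,p}(\Omega)$, and then concludes by composing with the compact embedding $\mathcal{W}_{0}^{1,p}(\Omega)\hookrightarrow L^{p}(\Omega)$. You instead build everything by hand: well-posedness via Browder--Minty (correctly noting that the lower bound $r\geq\frac{dp^{\prime}}{d+p^{\prime}}=(p^{*})^{\prime}$ is exactly what puts $L^{r}(\Omega)$ inside $\mathcal{W}^{-1,p^{\prime}}(\Omega)$), boundedness in $\mathcal{W}^{1,p}(\Omega)$ by testing with $\widetilde{u}_{f}-h$, compactness via Rellich--Kondrachov, and continuity through the monotonicity/$(S_{+})$ argument together with uniqueness and the subsequence principle. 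This buys a self-contained proof of the step the paper leaves implicit (continuity of the solution operator is not a formal consequence of each $\widetilde{u}_{f}$ being a continuous function, so your detailed limit passage genuinely fills that in), at the cost of length; note also that Minty's trick would let you identify the limit equation from weak convergence of the gradients alone, so the upgrade to strong gradient convergence, while correct, is not strictly needed. Two small inaccuracies that do not affect the conclusion: the a priori estimate obtained from Young's inequality is not linear in $\Vert f\Vert_{L^{r}}$ as written (the exponent $\tfrac{1}{p-1}$ appears), but it still sends bounded sets to bounded sets, which is all you use; and the target of the intermediate map is the affine set $h+\mathcal{W}_{0}^{1,p}(\Omega)$ rather than $\mathcal{W}_{0}^{1,p}(\Omega)$ itself, which changes nothing in the compactness step.
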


\begin{proof}
By the regularity theory we have $\widetilde{u}_{f}$ and $\widetilde{v}_{g}$
are continuous functions, so $%
\begin{array}
[c]{ccc}%
T: & L^{r}\left(  \Omega \right)  \times L^{r}\left(  \Omega \right)
\rightarrow & \mathcal{W}_{0}^{1,p}\left(  \Omega \right)  \times
\mathcal{W}_{0}^{1,p}\left(  \Omega \right)
\end{array}
$ is a continuous operator, on the other hand $\mathcal{W}_{0}^{1,p}\left(
\Omega \right)  \hookrightarrow L^{p}\left(  \Omega \right)  $ is compact, hence
the result.
\end{proof}

Let $\mathcal{B}_{1}\left(  u,v\right)  \left(  .\right)  :=\widetilde
{\varphi}\left(  .,u\left(  .\right)  ,v\left(  .\right)  \right)  $ the
Nemytskii operator associated to $\widetilde{\varphi}$, and $\mathcal{B}%
_{2}\left(  u,v\right)  \left(  .\right)  :=\widetilde{\psi}\left(  .,u\left(
.\right)  ,v\left(  .\right)  \right)  $ the Nemytskii operator associated to
$\widetilde{\psi}$.

\begin{lemma}
The operators $%
\begin{array}
[c]{ccc}%
\mathcal{B}_{1}: & L^{p}\left(  \Omega \right)  \times L^{p}\left(
\Omega \right)  \rightarrow & L^{r}\left(  \Omega \right)
\end{array}
$, and $%
\begin{array}
[c]{ccc}%
\mathcal{B}_{2}: & L^{p}\left(  \Omega \right)  \times L^{p}\left(
\Omega \right)  \rightarrow & L^{r}\left(  \Omega \right)
\end{array}
$ are continuous and bounded.
\end{lemma}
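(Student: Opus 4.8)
The plan is to recognise $\mathcal{B}_1$ and $\mathcal{B}_2$ as Nemytskii (superposition) operators generated by the Carath\'eodory functions $\widetilde{\varphi},\widetilde{\psi}$ and to run the classical argument for such operators, the only genuinely delicate point being the compatibility of the exponents. First I would record why $L^r(\Omega)$ is the correct target: by hypothesis $r\le \tfrac{d}{p}\le p'$, so $r(p-1)\le p$ and, since $\Omega$ is bounded, we have continuous embeddings $L^{p'}(\Omega)\hookrightarrow L^r(\Omega)$ and $L^p(\Omega)\hookrightarrow L^{r(p-1)}(\Omega)$. Consequently $c,c'\in L^{p'}(\Omega)\subset L^r(\Omega)$ (by the Remark), and for $u\in L^p(\Omega)$ one has $|u|^{p-1}\in L^{p/(p-1)}(\Omega)=L^{p'}(\Omega)\subset L^r(\Omega)$. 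Thus $(\widetilde{H})$ already shows that $x\mapsto \widetilde{\varphi}(x,u(x),v(x))$ and $x\mapsto\widetilde{\psi}(x,u(x),v(x))$ belong to $L^r(\Omega)$ for every $(u,v)\in L^p(\Omega)\times L^p(\Omega)$ (measurability of these compositions being immediate from the Carath\'eodory property together with measurability of $u,v$), so $\mathcal{B}_1,\mathcal{B}_2$ are well defined.

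For boundedness I would apply $(\widetilde{H})$, Minkowski's inequality, the elementary identity $\big\||w|^{p-1}\big\|_{L^r(\Omega)}=\|w\|_{L^{r(p-1)}(\Omega)}^{p-1}$, and the embedding $L^p(\Omega)\hookrightarrow L^{r(p-1)}(\Omega)$ (valid on the bounded set $\Omega$ because $r(p-1)\le p$) to obtain, for all $(u,v)\in L^p(\Omega)\times L^p(\Omega)$,
\[
\|\mathcal{B}_1(u,v)\|_{L^r(\Omega)}\le \|c\|_{L^r(\Omega)}+C\big(\|u\|_{L^p(\Omega)}^{p-1}+\|v\|_{L^p(\Omega)}^{p-1}\big),
\]
and likewise for $\mathcal{B}_2$ with $c'$ in place of $c$. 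In particular bounded subsets of $L^p(\Omega)\times L^p(\Omega)$ are mapped to bounded subsets of $L^r(\Omega)$.

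For continuity, let $(u_n,v_n)\to(u,v)$ in $L^p(\Omega)\times L^p(\Omega)$. By the subsequence principle it suffices to show that every subsequence has a further subsequence along which $\mathcal{B}_1(u_n,v_n)\to\mathcal{B}_1(u,v)$ in $L^r(\Omega)$. Along a suitable subsequence (the standard extraction in the proof of the Riesz--Fischer theorem) we have $u_n\to u$ and $v_n\to v$ a.e.\ in $\Omega$, with $|u_n|\le\Phi$ and $|v_n|\le\Psi$ for some fixed $\Phi,\Psi\in L^p(\Omega)$. The Carath\'eodory property of $\widetilde{\varphi}$ then gives $\widetilde{\varphi}(x,u_n,v_n)\to\widetilde{\varphi}(x,u,v)$ for a.e.\ $x$, while $(\widetilde{H})$ yields
\[
\big|\widetilde{\varphi}(x,u_n,v_n)-\widetilde{\varphi}(x,u,v)\big|^{r}\le C\big(c(x)^{r}+\Phi(x)^{r(p-1)}+\Psi(x)^{r(p-1)}+|u(x)|^{r(p-1)}+|v(x)|^{r(p-1)}\big),
\]
whose right-hand side lies in $L^1(\Omega)$ by the exponent bounds above. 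Lebesgue's dominated convergence theorem then gives $\|\mathcal{B}_1(u_n,v_n)-\mathcal{B}_1(u,v)\|_{L^r(\Omega)}\to 0$, and the identical argument works for $\mathcal{B}_2$. The main obstacle is precisely this continuity step: it is handled by combining the "a.e.\ convergent, $L^p$-dominated subsequence" lemma with dominated convergence, after which everything reduces to bookkeeping controlled by the single inequality $r\le p'$.
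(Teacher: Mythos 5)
Your proof is correct, but it takes a genuinely different route from the paper. The paper does not prove the Nemytskii property by hand: it first uses Young's inequality to recast the growth condition $(\widetilde{H})$ in the canonical form $\lvert\widetilde{\varphi}(x,u,v)\rvert\leq \mathrm{const}\,\lvert u\rvert^{p/r}+\mathrm{const}\,\lvert v\rvert^{p/r}+a(x)$ with $a\in L^{r}(\Omega)$ (here it uses, as you do, that $r\leq p^{\prime}$ and $\lvert\Omega\rvert<\infty$ so that $L^{p^{\prime}}(\Omega)\hookrightarrow L^{r}(\Omega)$ and hence $\lvert h\rvert^{p-1},\lvert k\rvert^{p-1},c,c^{\prime}\in L^{r}$), and then invokes Zeidler's Proposition~26.6 as a black box to conclude that $\mathcal{B}_{1},\mathcal{B}_{2}$ are bounded and continuous from $L^{p}(\Omega)\times L^{p}(\Omega)$ to $L^{r}(\Omega)$. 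You instead reprove the relevant case of that theorem directly: boundedness via Minkowski, the identity $\bigl\lVert\,\lvert w\rvert^{p-1}\bigr\rVert_{r}=\lVert w\rVert_{r(p-1)}^{p-1}$ and the embedding $L^{p}\hookrightarrow L^{r(p-1)}$ (which rests on the same inequality $r(p-1)\leq p$, i.e.\ $r\leq p^{\prime}$), and continuity via the subsequence principle, the a.e.-convergent $L^{p}$-dominated subsequence lemma, the Carath\'eodory property and dominated convergence. What the paper's approach buys is brevity, at the cost of a citation and a somewhat delicate exponent manipulation (its Young-inequality step as written produces the exponent $\frac{p(p-1)^{2}}{r}$ rather than the claimed $\frac{p}{r}$ unless $p=2$, though the intended reduction is clear and achievable, e.g.\ via $\lvert u\rvert^{p-1}\leq 1+\lvert u\rvert^{p/r}$); what your approach buys is a self-contained argument that works directly from $(\widetilde{H})$ and sidesteps that manipulation entirely. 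Both arguments hinge on exactly the same structural facts: $r\leq p^{\prime}$, $\lvert\Omega\rvert<\infty$, and $c,c^{\prime}\in L^{r}(\Omega)$.
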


\begin{proof}
We see that $\frac{r}{p\left(  p-1\right)  }+\frac{p\left(  p-1\right)
-r}{p\left(  p-1\right)  }=1$, using Young inequality in $\left(
\widetilde{H}\right)  $ with these exponents, we get
\[
\text{\ }\left(  \widetilde{H}_{1}\right)  \text{ \  \ }\left \vert
\widetilde{\varphi}\left(  x,u,v\right)  \right \vert \text{ }\leq \frac
{r}{p\left(  p-1\right)  }\left \vert u\right \vert ^{\frac{p}{r}}+\frac
{r}{p\left(  p-1\right)  }\left \vert v\right \vert ^{\frac{p}{r}}+a\left(
x\right)  \text{ \  \  \  \ }%
\]

where $a\left(  x\right)  =\frac{p\left(  p-1\right)  -r}{p\left(  p-1\right)
}a_{1}^{^{\prime^{\frac{p\left(  p-1\right)  }{p\left(  p-1\right)  -r}}}%
}+\frac{p\left(  p-1\right)  -r}{p\left(  p-1\right)  }a_{2}^{^{\prime
^{\frac{p\left(  p-1\right)  }{p\left(  p-1\right)  -r}}}}+c\left(  x\right)
.$ Since $r\leq p^{^{\prime}}$ and $\left \vert \Omega \right \vert <\infty$ then
$L^{p^{^{\prime}}}\hookrightarrow L^{r}$ , so $\left \vert h\right \vert
^{p-1},\left \vert k\right \vert ^{p-1}\in L^{r}$ consequently $a\in L^{r}.$
According to \cite[proposition 26.6]{Zei90} an increase of type $\left(
\widetilde{H}_{1}\right)  $ is sufficient for the operator $\mathcal{B}_{1}$
to be continuous and bounded from $L^{p}\left(  \Omega \right)  \times
L^{p}\left(  \Omega \right)  $ to $L^{r}\left(  \Omega \right)  $, by the same
way we show that $\mathcal{B}_{2}$ is continuous and bounded from
$L^{p}\left(  \Omega \right)  \times L^{p}\left(  \Omega \right)  $ to
$L^{r}\left(  \Omega \right)  .$
\end{proof}

\begin{proof}
[Proof of the thoerem]Let $\Lambda \left(  f,g\right)  :=\left(  \mathcal{B}%
_{1}\circ T\left(  f,g\right)  ;\mathcal{B}_{2}\circ T\left(  f,g\right)
\right)  $

For $M>0$, we put:%
\[
\mathcal{K}_{M}=\left \{  \left(  f,g\right)  \in L^{r}\left(  \Omega \right)
\times L^{r}\left(  \Omega \right)  :\left \Vert \left(  f,g\right)  \right \Vert
_{r\times r}\leq M\right \}
\]

where $\left \Vert \left(  f,g\right)  \right \Vert _{r\times r}=\max \left \{
\left \Vert f\right \Vert _{r}\text{ };\left \Vert g\right \Vert _{r}\right \}  .$

For $\left(  f,g\right)  \in \mathcal{K}_{M}$, we have:
\begin{align*}
\left \Vert \Lambda \left(  f,g\right)  \right \Vert _{r\times r}  & =\left \Vert
\left(  \mathcal{B}_{1}\circ T\left(  f,g\right)  ;\mathcal{B}_{2}\circ
T\left(  f,g\right)  \right)  \right \Vert _{r\times r}\\
& =\max \left \{  \left \Vert \mathcal{B}_{1}\left(  \widetilde{u}_{f}%
,\widetilde{v}_{g}\right)  \right \Vert _{r}\text{ };\left \Vert \mathcal{B}%
_{2}\left(  \widetilde{u}_{f},\widetilde{v}_{g}\right)  \right \Vert
_{r}\right \}  .
\end{align*}

Using $\left(  \widetilde{H}\right)  $ we get:
\[
\left \Vert \mathcal{B}_{1}\left(  \widetilde{u}_{f},\widetilde{v}_{g}\right)
\right \Vert _{r}\leq a_{1}^{^{\prime}}\left \Vert \widetilde{u}_{f}\right \Vert
_{r\left(  p-1\right)  }^{p-1}+a_{2}^{^{\prime}}\left \Vert \widetilde{v}%
_{g}\right \Vert _{r\left(  p-1\right)  }^{p-1}+\left \Vert c\right \Vert _{r}%
\]

according to (\cite[theorem 2.5]{DaDr09} ) we know that there exist $C>0$ such
that
\[
\left \Vert \widetilde{u}_{f}\right \Vert _{s}^{p-1}\leq C\left \Vert
f\right \Vert _{r}%
\]

and
\[
\left \Vert \widetilde{v}_{g}\right \Vert _{s}^{p-1}\leq C\left \Vert
g\right \Vert _{r}%
\]

Since $s>r\left(  p-1\right)  $ and $\left \vert \Omega \right \vert <\infty$,
the H\"{o}lder inequality implique that
\begin{align*}
\left \Vert \widetilde{u}_{f}\right \Vert _{r\left(  p-1\right)  }^{p-1}  &
\leq \left \vert \Omega \right \vert ^{\frac{1}{r}-\frac{\left(  p-1\right)  }{s}%
}\left \Vert \widetilde{u}_{f}\right \Vert _{s}^{p-1}\\
& \leq C\left \vert \Omega \right \vert ^{\frac{p}{d}}\left \Vert f\right \Vert
_{r}%
\end{align*}

and%

\begin{align*}
\left \Vert \widetilde{v}_{g}\right \Vert _{r\left(  p-1\right)  }^{p-1}  &
\leq \left \vert \Omega \right \vert ^{\frac{1}{r}-\frac{\left(  p-1\right)  }{s}%
}\left \Vert \widetilde{v}_{g}\right \Vert _{s}^{p-1}\\
& \leq C\left \vert \Omega \right \vert ^{\frac{p}{d}}\left \Vert g\right \Vert
_{r}%
\end{align*}

so
\[
\left \Vert \mathcal{B}_{1}\circ \left(  f,g\right)  \right \Vert _{r}\leq
C\left \vert \Omega \right \vert ^{\frac{p}{d}}\left(  a_{1}^{^{\prime}%
}\left \Vert f\right \Vert _{r}+a_{2}^{^{\prime}}\left \Vert g\right \Vert
_{r}\right)  +\left \Vert c\right \Vert _{r}%
\]

and by the same way, we get
\[
\left \Vert \mathcal{B}_{2}\circ \left(  f,g\right)  \right \Vert _{r}\leq
C\left \vert \Omega \right \vert ^{\frac{p}{d}}\left(  b_{1}^{^{\prime}%
}\left \Vert f\right \Vert _{r}+b_{2}^{^{\prime}}\left \Vert g\right \Vert
_{r}\right)  +\left \Vert c^{^{\prime}}\right \Vert _{r}%
\]

since $\left \vert \Omega \right \vert $ is small enough we can choise
$\left \vert \Omega \right \vert $ such that:
\[
\lambda:=\max \left \{  a_{1}^{^{\prime}},a_{2}^{^{\prime}},b_{1}^{^{\prime}%
},b_{2}^{^{\prime}}\right \}  C\left \vert \Omega \right \vert ^{\frac{p}{d}}<1
\]
hence
\[
\left \Vert \mathcal{B}_{1}\circ \left(  f,g\right)  \right \Vert _{r}\leq
\lambda \max \left \{  \left \Vert f\right \Vert _{r},\left \Vert g\right \Vert
_{r}\right \}  +\left \Vert c\right \Vert _{r}%
\]

and
\[
\left \Vert \mathcal{B}_{2}\circ \left(  f,g\right)  \right \Vert _{r}\leq
\lambda \max \left \{  \left \Vert f\right \Vert _{r},\left \Vert g\right \Vert
_{r}\right \}  +\left \Vert c^{^{\prime}}\right \Vert _{r}%
\]

so
\[
\max \left \{  \left \Vert \mathcal{B}_{1}\circ \left(  f,g\right)  \right \Vert
_{r}\text{ };\left \Vert \mathcal{B}_{2}\circ \left(  f,g\right)  \right \Vert
_{r}\right \}  \leq \lambda \max \left \{  \left \Vert f\right \Vert _{r},\left \Vert
g\right \Vert _{r}\right \}  +\max \left \{  \left \Vert c\right \Vert
_{r};\left \Vert c^{^{\prime}}\right \Vert _{r}\right \}
\]

then
\begin{align*}
\left \Vert \Lambda \left(  f,g\right)  \right \Vert _{r\times r}  & \leq
\lambda \max \left \{  \left \Vert f\right \Vert _{r},\left \Vert g\right \Vert
_{r}\right \}  +\max \left \{  \left \Vert c\right \Vert _{r};\left \Vert
c^{^{\prime}}\right \Vert _{r}\right \} \\
& \leq \lambda M+\max \left \{  \left \Vert c\right \Vert _{r};\left \Vert
c^{^{\prime}}\right \Vert _{r}\right \}
\end{align*}

putting
\[
M_{0}:=\frac{\max \left \{  \left \Vert c\right \Vert _{r};\left \Vert c^{^{\prime
}}\right \Vert _{r}\right \}  }{1-\lambda}%
\]

hence $\Lambda \left(  \mathcal{K}_{M}\right)  \subset \mathcal{K}_{M}$,
$\forall M\geq M_{0}$.

$K_{M}$ is a no empty closed convex subset of $\ L^{r}\left(  \Omega \right)
\times L^{r}\left(  \Omega \right)  $ and $%
\begin{array}
[c]{ccc}%
\Lambda: & L^{r}\left(  \Omega \right)  \times L^{r}\left(  \Omega \right)
\rightarrow & L^{r}\left(  \Omega \right)  \times L^{r}\left(  \Omega \right)
\end{array}
$ is competly continuos, so by Shauder fixed point theorem $\Lambda$ admits a
fixed point in $\mathcal{K}_{M}$, hence the problem $\left(  P\right)  $ admet
a weak solution.
\end{proof}

By the regularity theory \cite[Corollary 4.10]{MaZi97}, any bounded solution
of (\ref{1eq1}) can be redefined in a set of measure zero so that it becomes continuous.

\bigskip

\bigskip

\bigskip

\bigskip

\bigskip

\bigskip

\bigskip

\bigskip

\bigskip

\bigskip

\bigskip

\bigskip

\bigskip
\end{document}